\newtheorem{thm}{Theorem}[section]
\newtheorem{rem}[thm]{Remark}
\theoremstyle{definition}
\numberwithin{equation}{section} 
\numberwithin{figure}{section}
\numberwithin{table}{section}
\newcommand{\bE}{\mathbf{E}}
\newcommand{\eps}{\varepsilon}
\newcommand{\bR}{\mathbf{R}}
\newcommand{\bP}{\mathbf{P}}
\begin{document}

\title{Pisier type inequalities for  $K$-convex  spaces}



\author{Alexander Volberg}
\address{(A.V.) Department of Mathematics, MSU, 
East Lansing, MI 48823, USA}
\email{volberg@math.msu.edu}

\begin{abstract}
We generalize several theorems  of Hyt\"onen-Naor \cite{HN} using the approach from \cite{IVHV}. In particular, we give yet another necessary and sufficient condition (see \eqref{dF}) to be a $K$-convex space, where the sufficiency was proved by Naor--Schechtman \cite{NS}. This condition is in terms of the boundedness of the second order Riesz transforms 
$\{\Delta^{-1} D_i\}_{i=1}^n$ in $L^p(\Omega_n, X)$.
\end{abstract}

\subjclass[2010]{46B10, 46B09; 46B07; 60E15}

\keywords{Rademacher type; Enflo type;
Pisier's inequality; Banach space theory}

\thanks{ The research of the  author is supported by NSF DMS-1900286, DMS-2154402 and by Hausdorff Center for Mathematics }

\maketitle

\thispagestyle{empty}

\section{A variant of Pisier's inequality in spaces of finite co-type}
\label{sec:co-type}

In this note we consider functions on Hamming cube $\Omega_n:=\{-1, 1\}^n$. It is provided with natural measure giving $2^{-n}$ weight to each point, the integration will be denoted by $\bE$. There are differentiations: $\partial_i$ denotes the usual partial derivative with respect to $i$-th variable $\eps_i$, but $D_i$ is often more convenient:
$$
D_i=\eps_i \partial_i\,.
$$
Laplacian is $\Delta := D_1+\dots+D_n$. Notice that $D_i^2= D_i$.

\bigskip

The next theorem is basically proved in \cite{IVHV}, but  for the convenience of reading 
we prove it here as it plays an important part in what follows.

\begin{thm}
\label{co-type}
For any functions $f_i: \{-1,1\}^n \to X$, $i=1, \dots, n$, $p\in [1, \infty)$, we have
\begin{equation}
\label{Deltafi}
\bE\|\sum_{i=1}^n D_i f_i\|^p \le  C(q, p) \bE\|\sum_{i=1}^n \delta_i \Delta f_i\|^p
\end{equation}
if and only if $(X,\|\cdot\|)$ be a Banach space of {\it finite co-type} $q$.
\end{thm}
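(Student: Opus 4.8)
The plan is to prove the two implications separately. The direction ``$X$ of finite co-type $q$ $\Rightarrow$ \eqref{Deltafi}'' is the substantive one and follows the duplication/heat-flow scheme of \cite{IVHV}; the converse is obtained by testing \eqref{Deltafi} on functions adapted to copies of $\ell_\infty^N$ sitting inside $X$.

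For the sufficiency the first step is to reduce \eqref{Deltafi} to a cleaner estimate. Since $D_i$ and $\Delta$ kill constants, both sides of \eqref{Deltafi} are unchanged upon replacing each $f_i$ by $f_i-\bE f_i$, so one may assume $\bE f_i=0$. Let $\delta_1,\dots,\delta_n$ be the auxiliary signs on the right-hand side of \eqref{Deltafi} and put $F_\delta:=\sum_{j=1}^n\delta_jf_j$. Because $\bE_\delta[\delta_i\delta_j]=\mathbf 1_{\{i=j\}}$, we have $f_i=\bE_\delta[\delta_iF_\delta]$, and therefore, applying the linear operator $D_i$ and summing over $i$,
\begin{gather*}
\sum_{i=1}^nD_if_i=\bE_\delta\Bigl[\sum_{i=1}^n\delta_i\,D_iF_\delta\Bigr],\\
\bE\Bigl\|\sum_{i=1}^nD_if_i\Bigr\|^p\le\bE_\delta\,\bE\Bigl\|\sum_{i=1}^n\delta_i\,D_iF_\delta\Bigr\|^p\qquad\text{(Jensen)}.
\end{gather*}
Since $\Delta F_\delta=\sum_i\delta_i\Delta f_i$, the inequality \eqref{Deltafi} follows once one proves, \emph{for every fixed sign vector $\delta$ and every $F\colon\Omega_n\to X$},
\begin{equation*}
\bE\Bigl\|\sum_{i=1}^n\delta_i\,D_iF\Bigr\|^p\le C(q,p)\,\bE\|\Delta F\|^p
\tag{$\star$}
\end{equation*}
with $C(q,p)$ independent of $n$ and of $\delta$. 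Putting $G:=\Delta F$ (an arbitrary mean-zero function) and using that $\sum_i\delta_iD_i$ and $\Delta$ are commuting Fourier multipliers, $(\star)$ is exactly the statement that the multiplier with symbol $w_S\mapsto\bigl(|S|^{-1}\sum_{i\in S}\delta_i\bigr)w_S$ — whose symbol has modulus $\le1$ — is bounded on the mean-zero subspace of $L^p(\Omega_n,X)$, uniformly in $n$ and $\delta$. To prove $(\star)$ I would combine the subordination formula $\Delta^{-1}D_i=\int_0^\infty e^{-t\Delta}D_i\,dt$ with the elementary identity $e^{-t\Delta}D_ig=e^{-t}\eps_i\,Q_t^{(i)}\partial_ig$, where $Q_t^{(i)}$ denotes the heat semigroup acting only in the coordinates $\ne i$; the smoothing carried by the semigroup, fed into the cotype-$q$ Kahane--Khinchine inequality $\bigl(\sum_i\|y_i\|^q\bigr)^{1/q}\le c(q,p)\bigl(\bE_\eta\|\sum_i\eta_iy_i\|^p\bigr)^{1/p}$, is what produces a bound independent of $n$. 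I expect $(\star)$ to be the main obstacle: it is the only step that uses the hypothesis on $X$, and it genuinely fails for spaces of infinite co-type.

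For the necessity, assume \eqref{Deltafi} holds with a finite constant. If $X$ did not have co-type $q$, then by the Maurey--Pisier theorem $X$ would contain $\ell_\infty^N$ almost isometrically for every $N$; inserting into \eqref{Deltafi} functions $f_i$ built from such a copy — chosen so that the derivatives $D_if_i$ add up coherently, while the additional ``off-diagonal'' terms present in $\Delta f_i=\sum_jD_jf_i$ force the $\delta$-randomized right-hand side to be of strictly smaller order in $N$ — contradicts \eqref{Deltafi} once $N$ is large. This is the same phenomenon that makes the logarithmic loss sharp in the classical form of Pisier's inequality on $\ell_\infty^N$ (cf. \cite{HN}); tracking constants through the sufficiency argument then also shows that the $C(q,p)$ in \eqref{Deltafi} depends only on $q$ and $p$.
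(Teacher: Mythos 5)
Your sufficiency argument has a fatal gap at the reduction to $(\star)$. Writing $G=\Delta F$, the inequality $(\star)$ asserts that $\bE_\eps\|\sum_i\delta_i\Delta^{-1}D_iG\|^p\le C(q,p)\,\bE_\eps\|G\|^p$ for \emph{every fixed} sign vector $\delta$, with a constant independent of $n$ and $\delta$. Averaging over $\delta$ would then give \eqref{dF} with an $n$-independent constant, and by the Naor--Schechtman theorem \cite{NS} quoted in Section \ref{K} this forces $X$ to be $K$-convex. Since $\ell_1$ has co-type $2$ but is not $K$-convex (it has only trivial type), $(\star)$ cannot hold uniformly in $n$ for all finite co-type spaces; indeed even the $\delta$-averaged inequality \eqref{dF} can have constant of order $\sqrt n$ for co-type $2$ spaces (cf.\ the $L^1$ example of \cite{HN} discussed around \eqref{F1}). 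So you have reduced a true statement to a strictly stronger false one. The loss happens at the Jensen step: passing from $\sum_iD_if_i=\bE_\delta[\sum_i\delta_iD_iF_\delta]$ to a bound that must hold for each frozen $\delta$ and arbitrary $F$ discards exactly the diagonal coupling between the sign $\delta_i$ and the derivative $D_i$ that makes \eqref{Deltafi} weaker than a Riesz-transform bound. The proof in the paper keeps this coupling via the semigroup identity \eqref{main}: applying $\Delta$, summing in $j$ and integrating in $t$ represents $\sum_jD_jf_j(\eps)$ (up to sign) as $\int_0^\infty\bE_\xi\big[\sum_j\delta_j(t)[\Delta f_j](\eps\xi(t))\big]\frac{e^{-t}}{\sqrt{1-e^{-2t}}}\,dt$ with $\delta_j(t)=\frac{\xi_j(t)-e^{-t}}{\sqrt{1-e^{-2t}}}$; after symmetrizing $\xi_j(t)-\xi_j'(t)$, finite co-type enters only through the comparison of randomized sums (Theorem 4.1 of \cite{IVHV}), which yields the gain $(1-e^{-2t})^{1/\max(q,p)-1/2}$ making the $t$-integral converge. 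Your subordination identity $e^{-t\Delta}D_ig=e^{-t}\eps_iQ_t^{(i)}\partial_ig$ is correct but does not by itself produce any such gain, and no amount of smoothing can rescue $(\star)$ since the statement itself is false.

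Your sketch of the necessity direction (Maurey--Pisier embedding of $\ell_\infty^N$ plus a Talagrand-type example as in \cite{T}) is the right idea and is essentially how \cite{IVHV} argues, but as written it is only an outline; the paper itself simply cites \cite{IVHV} for this implication.
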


Here $\delta_i$ are standard Rademacher random variables.
If one chooses $f_i= \Delta^{-1} D_i (f-\bE f)$, then one restores  Proposition 4.2 of \cite{IVHV}. In other words, this becomes Pisier inequality \cite{P} with constant independent of $n$.

\begin{rem}
In \cite{HN} it was proved  for UMD Banach spaces (and a bit more general case).
\end{rem}

\begin{proof}
The proof of \eqref{Deltafi} for all finite co-type Banach spaces  $X$ follows almost immediately from the formula
\begin{equation}
\label{main}
e^{-t\Delta} D_j f (\eps)= \frac{e^{-t}}{\sqrt{1-e^{-2t}}} \bE_\xi \Big[\frac{\xi_j(t) -e^{-t}}{\sqrt{1-e^{-2t}}} f(\eps \xi(t))\Big]\,,
\end{equation}
where $\xi_i(t)$ assume values $\pm 1$ with probability $\frac12(1\pm e^{-t})$, and are mutually independent and independent from $\eps_j, j=1, \dots, n$. The formula can be checked by direct calculation, see also Lemma 2.1 of \cite{IVHV}.

Adding and applying $\Delta= \Delta_\eps$, we get (we denote $\delta_j(t):= \frac{\xi_j(t) -e^{-t}}{\sqrt{1-e^{-2t}}}$)

$$
\Delta e^{-t\Delta} \sum_{j=1}^n D_j f_j (\eps)= \frac{e^{-t}}{\sqrt{1-e^{-2t}}} \bE_\xi \Big[\sum_{j=1}^n \delta_j(t) [\Delta f_j](\eps \xi(t))\Big]\,,
$$
After integrating in $t$, we get
$$
-\sum_{j=1}^n D_j f_j (\eps) = \int_0^\infty  \bE_\xi \Big[\sum_{j=1}^n \delta_j(t) [\Delta f_j](\eps \xi(t))\Big]\, \frac{e^{-t}}{\sqrt{1-e^{-2t}}} dt\,.
$$
Hence, we write
$$
(\bE_\eps \|\sum_{j=1}^n D_j f_j (\eps)\|^p)^{1/p} \le \int_0^\infty \Big(\bE_\xi\bE_\eps \Big\|\sum_{j=1}^n \delta_j(t) [\Delta f_j](\eps \xi(t))\Big\|^p\Big)^{1/p} \, \frac{e^{-t}}{\sqrt{1-e^{-2t}}} dt=
$$
$$
 \int_0^\infty \Big(\bE_\xi\bE_\eps \Big\|\sum_{j=1}^n \delta_j(t) [\Delta f_j](\eps )\Big\|^p\Big)^{1/p} \, \frac{e^{-t}}{\sqrt{1-e^{-2t}}} dt,
$$
where we used that for every fixed $t, \xi(t)$ the distribution of $\eps\to [\Delta f_j](\eps\xi (t))$ is the same as that of $\eps\to [\Delta f_j](\eps )$. We continue by  introducing symmetrization by means of $\{\xi_i'(t)\}$ independent from all $\{\xi_j\}_{j=1}^n$ and having the same distribution as $\xi_i$, $i=1, \dots, n$,
$$
 \int_0^\infty \Big(\bE_\eps \bE_{\xi, \xi'} \Big\|\sum_{j=1}^n  \frac{\xi_j(t) -\xi'_j(t)}{\sqrt{1-e^{-2t}}} [\Delta f_j](\eps )\Big\|^p\Big)^{1/p} \, \frac{e^{-t}}{\sqrt{1-e^{-2t}}} dt\le
$$
$$
C(q, p)  \int_0^\infty \Big(\bE_\eps \bE_\delta \Big\|\sum_{j=1}^n \delta_j [\Delta f_j](\eps )\Big\|^p\Big)^{1/p} \, \frac{e^{-t}}{(1-e^{-2t})^{1-\frac{1}{\max (q, p)} }}dt\,,
$$
where we used Theorem 4.1 from \cite{IVHV} and the fact that the co-type of $X$ is $q<\infty$. The last expression is bounded by 
$$
C(q, p) \max (q, p)\Big( \bE_\eps \bE_\delta \Big\|\sum_{j=1}^n \delta_j [\Delta f_j](\eps )\Big\|^p\Big)^{1/p},
$$
 and we are done.

Notice that, as it follows from \cite{IVHV}, the condition of having finite co-type is not only sufficient for inequality \eqref{Deltafi} to hold, but it is also necessary.

\end{proof}

\section{Pisier inequality's constant}
\label{log}

For any Banach space $X$ with no restriction Pisier's inequality claims
\begin{equation}
\label{Pisier}
(\bE\|f-\bE f\|^p)^{1/p} \le  C(n) (\bE\|\sum_{i=1}^n \delta_i D_i f\|^p)^{1/p}\,,
\end{equation}
where
$$
C(n) \le C \log n\,.
$$
In \cite{HN} it was shown that
\begin{equation}
\label{logC}
C(n) \le  \log n + C\,.
\end{equation}

\begin{rem}
Looking at Section 6 in Talagrand's paper \cite{T} one can notice, that  one gets the estimate from below of Pisier's constant if $X=L^\infty(\Omega_n)$:
$$
C(n) \ge (\frac12 - \delta) \log n - C_\delta\,.
$$
\end{rem}

\section{Yet another generalization of Pisier's inequality}
\label{F}

Let $F$ be a function of $\{-1, 1\}^n\times \{-1, 1\}^n$ with values in the Banach space $X$, and 
$F_j(\eps) = \bE_\delta \delta_j F(\eps, \delta)$.  For the special case $F(\eps, \delta) =\sum_{j=1}^n f_j(\eps) \delta_j$, inequality
\begin{equation}
\label{F1}
(\bE_\eps\|\sum_{j=1}^n \Delta^{-1} D_j F_j\|^p)^{1/p} \le C(p, n) (\bE_{\delta, \eps} \|F\|^p)^{1/p},\, 1< p <  \infty,
\end{equation}
is exactly \eqref{Deltafi}. For such  very special $F=\sum_{j=1}^n f_j(\eps) \delta_j$ we know three things: 
\begin{enumerate}
\item  for general Banach space $X$, $C(p, n) \lesssim \log n$, 
\item this is sharp growth, 
\item  $C(p, n)\le C(p, q)<\infty$ iff $X$ is of finite co-type, and if $X$ is not of finite co-type,  constant  can grow logarithmically in $n$, \cite{T}.
\end{enumerate}

\bigskip

However, it is interesting to ask for general function $F(\eps, \delta)$ not just for functions of the type $F =\sum_{j=1}^n f_j(\eps) \delta_j$ what happens with \eqref{F1}, namely, 

A) for what Banach spaces constant does not depend on $n$? 

B) What is the worst growth of constant with $n$ for general Banach space $X$?

C) What is the worst growth of constant with $n$ for special classes of Banach spaces, e.g. for $X$ of finite co-type?

\bigskip

\begin{rem}
In \cite{HN} the example of co-type $2$ space is considered, namely, $X= L^1(\{-1, 1\}^n)$, for which constant grows at least as $\sqrt{n}$.  Below we show that this is the worst behavior for an arbitrary Banach space  {\it of finite co-type}. Thus, conceptually, \eqref{F1} turns out to be very different from  \eqref{Deltafi} or from the original Pisier inequality.  
\end{rem}

\bigskip

Before formulating theorem let us consider the dual inequality to \eqref{F1}:
\begin{equation}
\label{dF}
\bE_{\delta, \eps}\|\sum_{j=1}^n \delta_j \Delta^{-1} D_j g(\eps)\|^p \le C(p, n) \bE_\eps\|g\|^p, \, 1< p< \infty\,.
\end{equation}
In cases when $C(p, n)<\infty$ independent of $n$ and for $1<p<\infty$, this is one of the  {\it  typical Riesz transforms} inequalities.

\begin{thm}
\label{F1thm}
Let $X$ be of finite co-type $q$. Then inequality \eqref{F1} \textup(and thus \eqref{dF} for the dual space\textup) holds with constant $C(p, q)\sqrt{n}$.
The growth of constant cannot be improved in this class of $X$.
\end{thm}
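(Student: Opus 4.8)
The plan is to deduce the upper bound $C(p,q)\sqrt n$ from Theorem~\ref{co-type} together with a decoupling/Khintchine argument in the $\delta$ variable. Write $F=F(\eps,\delta)$ and set $F_j(\eps)=\bE_\delta\,\delta_j F(\eps,\delta)$. The key observation is that $\sum_j F_j(\eps)\delta_j$ is exactly the ``first Rademacher chaos'' projection of $F$ in the $\delta$ variable; call it $(P_1 F)(\eps,\delta)$. Since $P_1$ is a conditional-expectation-type operator (it is $\bE_\delta$ applied after multiplying and re-expanding), it is bounded on $L^p(\Omega_n\times\Omega_n,X)$ — indeed $P_1$ is a contractive projection for the natural product structure, being the level-1 piece of the Rademacher decomposition, whose boundedness on $L^p(X)$ for $1<p<\infty$ is classical (it is, up to normalization, the first Riesz projection on the cube, bounded by Bourgain/Pisier since we only need the \emph{single} level-1 projection, not full UMD). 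Thus $(\bE\|P_1F\|^p)^{1/p}\le C(p)(\bE\|F\|^p)^{1/p}$, and it suffices to prove \eqref{F1} for $F$ of the special form $F=\sum_j f_j(\eps)\delta_j$, i.e. to prove
\[
\Big(\bE_\eps\big\|\sum_{j=1}^n\Delta^{-1}D_j f_j\big\|^p\Big)^{1/p}\ \le\ C(p,q)\sqrt n\ \Big(\bE_{\eps,\delta}\big\|\sum_{j=1}^n f_j(\eps)\delta_j\big\|^p\Big)^{1/p}.
\]

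For this reduced inequality I would first apply Theorem~\ref{co-type}, which (choosing the roles of the functions appropriately, namely replacing $f_i$ there by $\Delta^{-1}D_i f_i$ and noting $D_i\Delta^{-1}D_i = \Delta^{-1}D_i$ since $D_i^2=D_i$, so $\sum_i D_i(\Delta^{-1}D_i f_i)=\sum_i \Delta^{-1}D_i f_i$ while $\Delta(\Delta^{-1}D_i f_i)=D_i f_i$) gives
\[
\Big(\bE_\eps\big\|\sum_{j}\Delta^{-1}D_j f_j\big\|^p\Big)^{1/p}\le C(q,p)\Big(\bE_{\eps,\delta}\big\|\sum_j\delta_j D_j f_j\big\|^p\Big)^{1/p}.
\]
Now the task is to bound $\bE\|\sum_j\delta_j D_j f_j\|^p$ by $n^{p/2}\,\bE\|\sum_j\delta_j f_j\|^p$. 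Here I would exploit that each $D_j$ is a \emph{bounded} operator on $L^p(\Omega_n,X)$ with norm $\le 2$ (it is $\eps_j\partial_j$, a difference operator composed with multiplication by a sign), and, crucially, that the $D_j$ applied to the single function $g(\eps)=\sum_j f_j(\eps)\delta_j$ is not what appears — rather we have a \emph{diagonal} sum $\sum_j\delta_j D_j f_j$. The $\sqrt n$ loss enters exactly here: bound $\bE_\delta\|\sum_j\delta_j D_j f_j\|^p$ crudely, e.g. by the triangle inequality in a cleverly iterated form, or more efficiently by comparing $\sum_j\delta_j D_j f_j$ to the single-variable operator. The cleanest route I foresee is: write $D_j f_j(\eps)=\tfrac12(f_j(\eps)-f_j(\sigma_j\eps))$ where $\sigma_j$ flips the $j$-th coordinate, so $\sum_j\delta_j D_j f_j=\tfrac12\sum_j\delta_j f_j-\tfrac12\sum_j\delta_j f_j(\sigma_j\,\cdot\,)$; the first sum has $L^p(X)$ norm equal to the right side, and for the second sum one uses that for each fixed $\eps$ the map $\delta\mapsto\sum_j\delta_j f_j(\sigma_j\eps)$ is a sign-sum of the \emph{same} vectors $f_j(\eps')$ (ranging $\eps'$ over the neighbors of $\eps$), and a counting/averaging over $\eps$ together with the contraction principle produces at worst a factor $\sqrt n$ — this is the step I expect to require the finite co-type hypothesis, via Theorem~4.1 of \cite{IVHV} as used in the proof of Theorem~\ref{co-type}, to convert an $\ell^2$-sum of the $n$ neighbor contributions into a Rademacher average with a dimension-independent constant, the surviving $\sqrt n$ coming from the number of terms.

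The \textbf{main obstacle} is precisely controlling $\bE\|\sum_j\delta_j D_j f_j\|^p$ against $n^{p/2}\bE\|\sum_j\delta_j f_j\|^p$ without extra logarithmic or polynomial-in-$n$ losses: the naive triangle inequality over $j$ gives $n$, not $\sqrt n$, so one must genuinely use cancellation in the $\delta_j$'s together with finite co-type. I would handle this by squaring appropriately (Khintchine in $L^p$, valid with constants depending only on $p$), reducing to an estimate of $\bE_\eps\big(\sum_j\|D_j f_j(\eps)\|^2\big)^{p/2}$ — which is not available in a general Banach space, hence one instead keeps the Rademacher average and invokes finite co-type to pass from the $\delta$-average to the square-function side \emph{only} for comparison purposes, absorbing the number of terms into $\sqrt n$ via Cauchy--Schwarz in $j$ applied to the co-type inequality. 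For the sharpness claim, the lower bound $\gtrsim\sqrt n$ is already exhibited in \cite{HN} by $X=L^1(\{-1,1\}^n)$, which has co-type $2$; I would simply cite that example, observing that it lies in the class ``finite co-type'' and hence shows $C(p,q)\sqrt n$ cannot be replaced by $o(\sqrt n)$ uniformly over all finite co-type $X$.
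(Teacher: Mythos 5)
Your reduction step contains a fatal error: the claim that the level-1 Rademacher projection $P_1$ in the $\delta$ variable is bounded on $L^p(\Omega_n\times\Omega_n,X)$ with a constant depending only on $p$ is false for the class of spaces in the theorem. Uniform (in $n$) boundedness of exactly this projection is the \emph{definition} of $K$-convexity (it is precisely what is used in the proof of Theorem \ref{F1type}), and by Pisier's theorem \cite{PAnnals} it is equivalent to nontrivial type; it fails, for instance, for $X=L^1(\{-1,1\}^n)$, which has cotype $2$. Indeed, if your claim were correct, your own argument (reduction to $F=\sum_j f_j(\eps)\delta_j$ followed by Theorem \ref{co-type}) would prove \eqref{F1} with a constant independent of $n$ for every finite-cotype space, contradicting the $\sqrt n$ lower bound from the $L^1$ example of \cite{HN} that you invoke for sharpness. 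The correct quantitative fact is $\|P_1\|_{L^p(X)\to L^p(X)}\le C_p\sqrt n$ for an \emph{arbitrary} Banach space (write $P_1F(\delta)=\bE_{\delta'}\big[(\sum_j\delta_j\delta_j')F(\delta')\big]$ and use Khintchine on the scalar kernel), and that is where the $\sqrt n$ must enter. This is in essence what the paper does: it writes $F_j(\eps)=\bE_\delta\,\delta_j F(\eps,\delta)$, runs the semigroup/co-type argument of Theorem \ref{co-type} to replace the biased signs $\delta_j(t)$ by Rademachers $\delta_j'$ with a dimension-free constant, and then notes that the sum collapses to the single vector $F(\eps,\delta)$ multiplied by the scalar $\sum_j\delta_j\delta_j'$, whose $L^p$ norm is $\lesssim\sqrt n$.

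The second half of your plan places the loss in the wrong step and does not carry it out. For the special $F=\sum_j f_j(\eps)\delta_j$ inequality \eqref{F1} needs no $\sqrt n$ at all: as the paper notes, it is exactly \eqref{Deltafi}, i.e.\ apply Theorem \ref{co-type} with $f_i$ replaced by $\Delta^{-1}f_i$ (for mean-zero $f_i$, so $\Delta\Delta^{-1}f_i=f_i$), giving constant $C(p,q)$. Your substitution $f_i\mapsto\Delta^{-1}D_if_i$ instead leaves you with the right-hand side $\bE\|\sum_j\delta_j D_jf_j\|^p$, and your proposed bound of this by $n^{p/2}\,\bE\|\sum_j\delta_jf_j\|^p$ is only sketched: after writing $D_jf_j=\tfrac12\big(f_j-f_j(\sigma_j\,\cdot\,)\big)$, the term $\sum_j\delta_jf_j(\sigma_j\eps)$ has a joint law genuinely different from that of $\sum_j\delta_jf_j(\eps)$, and the ``counting/averaging'' you describe is not an argument and, done naively, yields a factor $n$ rather than $\sqrt n$. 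So as written the proof does not stand; repaired correctly (dimension-free special case via Theorem \ref{co-type}, plus the universal bound $\|P_1\|\lesssim\sqrt n$), your strategy becomes essentially the paper's proof.
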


\begin{proof}
We again use the same formula \eqref{main}, now in the following form:

$$
\Delta e^{-t\Delta} \sum_{j=1}^n \Delta^{-1}D_j F_j (\eps)= \frac{e^{-t}}{\sqrt{1-e^{-2t}}} \bE_\xi \Big[\sum_{j=1}^n \delta_j(t) [ F_j](\eps \xi(t))\Big]\,,
$$
Hence
$$
\bE_{\delta, \eps}\|\sum_{j=1}^n \Delta^{-1} D_j F_j\|^p \lesssim\int_0^\infty\frac{e^{-t}}{\sqrt{1-e^{-2t}}}\bE_\delta\bE_\xi\bE_\eps\|\sum_{j=1}^n \delta_j\delta_j(t) F(\eps\xi, \delta)\|^p dt=
$$
$$
\int_0^\infty\frac{e^{-t}}{\sqrt{1-e^{-2t}}}\bE_\delta\bE_\xi\bE_\eps\|\sum_{j=1}^n \delta_j\delta_j(t) F(\eps, \delta)\|^p dt,
$$
where we used that for every fixed $\xi$ the distribution of $\eps\to  F(\eps\xi, \delta)$ is the same as the distribution of $\eps\to  F(\eps, \delta)$.
Using now finite co-type as before, we continue to write (below $\{\delta_j'\}$ are independent and independent of $\{\delta_j\}$ Rademacher random variables):
\begin{equation}
\label{co-typeF}
\lesssim \int_0^\infty \frac{e^{-t}}{(1-e^{-2t})^{1- \min(1/p, 1/q)}} \bE_\delta\bE_\eps\bE_{\delta'} \|\sum_{j=1}^n \delta_j\delta_j' F(\eps, \delta)\|^p dt \lesssim
\end{equation}
$$
C(p, q) \bE_\delta\Big[\bE_\eps \|F(\eps, \delta)\|^p  \bE_{\delta'}|\sum_{j=1}^n  \delta_j\delta_j' |^p \Big] \le C'(p, q) n^{p/2}\bE_\eps \bE_\delta \|F(\eps, \delta)\|^p\,.
$$

Theorem is proved.
\end{proof}

\subsection{$K$-convex Banach spaces}
\label{K}

Naor and Schechtman \cite{NS} proved that if \eqref{dF} holds with constant independent of $n$, then $X$ is $K$-convex. However, it seems that the converse statement was  open till now.  

Let us quote \cite{EI1}: ``Nevertheless, \eqref{dF} with $C(p, n)=C (\log n+1)$ is the best known bound to date for general $K$-convex spaces\dots .   Under additional assumptions (e.g. when $X$ is a $UMD^+$ space or when $X$ is a $K$-convex Banach lattice), inequality \eqref{dF} is known to hold true with a constant $C(p, X)$ independent of the dimension $n$ for functions of arbitrary degree $d$, see \cite{HN}''.

Proposition 34 of \cite{EI1} has the bound for $K$-convex spaces, and this bound is logarithmic in $n$ (logarithmic in $d$ for functions $f$ such that $\deg f\le d$).  As \cite{EI1} mentions, under extra assumption that $X$ is a $UMD^+$ space or $K$-convex Banach lattice inequality \eqref{dF} was proved with constant independent of $n$, see \cite{HN}. 

We prove \eqref{dF} with constant independent of $n$ for all $K$-convex $X$, thus making $K$-convexity to be equivalent to \eqref{dF}.

\bigskip

Recall that $K$-convexity is equivalent to $B$-convexity, which is equivalent to being  of type $>1$, see Theorem  2.1  and Remark 2.4 of \cite{PAnnals}.

\begin{thm}
\label{F1type}
Let $X$ be of non-trivial type (which is the same as $K$-convex). Let $1< p<\infty$. Then inequality \eqref{dF}  holds with constant $C( p)<\infty$ independently of $n$. 
\end{thm}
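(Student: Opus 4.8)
The plan is to mimic the proof of Theorem \ref{F1thm}, starting from the same semigroup identity \eqref{main}, but to replace the crude estimate $\bE_{\delta'}|\sum_{j=1}^n \delta_j\delta_j'|^p \le C(p)\, n^{p/2}$ by a dimension-free bound coming from $K$-convexity. Recall that on the Hamming cube the Rademacher projection (onto the span of the coordinate functions $\{\delta_j\}_{j=1}^n$) is bounded on $L^p(\Omega_n,X)$ with a constant $K_p(X)<\infty$ independent of $n$ precisely when $X$ is $K$-convex; this is the operator that will do the work. So I would first record, for the dual inequality \eqref{dF}, the starting point
$$
-\sum_{j=1}^n \Delta^{-1}D_j g(\eps) = \int_0^\infty \bE_\xi\Big[\sum_{j=1}^n \delta_j(t)\, g(\eps\xi(t))\Big]\,\frac{e^{-t}}{\sqrt{1-e^{-2t}}}\,dt,
$$
obtained exactly as in Section \ref{sec:co-type} but without the extra $\Delta$ (the $\Delta^{-1}$ in front of $D_j$ cancels it). Then, after freezing $t$, taking $L^p(\eps,\delta)$-norms, using that $\eps\mapsto g(\eps\xi(t))$ is equidistributed with $\eps\mapsto g(\eps)$, and symmetrizing $\xi$ against an independent copy $\xi'$ so that $\frac{\xi_j(t)-\xi_j'(t)}{\sqrt{1-e^{-2t}}}$ becomes (conditionally) a weighted Rademacher of the correct normalization, one is left with an integral over $t$ of terms of the form $\bE_\eps\bE_\delta\|\sum_j \delta_j\, \delta_j'\, g(\eps)\|^p$ against an integrable weight $\frac{e^{-t}}{(1-e^{-2t})^{1-\min(1/p,1/q)}}$ — here one still uses finite co-type (which $K$-convex spaces automatically have) to pass from the weighted-Rademacher average to the flat Rademacher average, exactly as in \eqref{co-typeF}.

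The key new step is the treatment of $\bE_\eps\bE_\delta\|\sum_{j=1}^n \delta_j\,\delta_j'\, g(\eps)\|^p$. The crucial observation is that this quantity equals $\bE_\eps\bE_\delta\|\,\mathrm{Rad}_\delta\big(g(\eps)\sum_j \delta_j\,\delta_j'\big)\|^p$ in the obvious sense: the vector $g(\eps)\sum_j \delta_j\delta_j'$, viewed as a function of $\delta$, has the function $\delta\mapsto\sum_j\delta_j\delta_j'$ in the first Rademacher chaos, so applying the Rademacher projection $\mathrm{Rad}_\delta$ on $L^p(\Omega_n^{\delta},X)$ fixes it. But $\mathrm{Rad}_\delta$ also fixes the constant-in-$\delta$ function $g(\eps)$ composed with... — more precisely, I would write the identity in the reverse direction: by $K$-convexity, $\|\sum_j\delta_j\delta_j' g(\eps)\|_{L^p(\delta)}$ is comparable, up to $K_p(X)$, to the norm of its projection, and one can then ``undo'' the chaos by averaging over $\delta'$ using contraction/comparison principles to reduce $\bE_{\delta'}$ of the above to a multiple of $\bE_\delta\|g(\eps)\delta_1\|^p = \bE\|g(\eps)\|^p$. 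Concretely: $\bE_\delta\bE_{\delta'}\|g(\eps)\sum_j\delta_j\delta_j'\|^p = \bE_{\delta'}\big[\bE_\delta\|g(\eps)\sum_j(\delta_j\delta_j')\|^p\big]$ and for fixed $\delta'$ the random variable $\sum_j \delta_j\delta_j'$ is just a Rademacher sum $\sum_j \pm\delta_j$, so by the Kahane contraction principle applied in the scalar variable (comparing $\sum_j\delta_j$ with $\delta_1$) — no, that goes the wrong way. The right tool is instead: apply the Rademacher projection in the $\delta$ variables to $F(\eps,\delta):=$ (the original object in \eqref{F1}) and use that $\mathrm{Rad}_\delta$ has norm $K_p(X)$; this replaces an arbitrary $F$ by $\sum_j F_j(\eps)\delta_j$ at the cost of $K_p(X)$, reducing \eqref{F1} for general $F$ to \eqref{F1} for chaos-one $F$, which is \eqref{Deltafi}, which holds with a dimension-free constant by Theorem \ref{co-type} since $X$ is of finite co-type. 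So the cleanest route is: \emph{the Rademacher projection reduces the general-$F$ inequality \eqref{F1} to the special-$F$ inequality \eqref{Deltafi}}, and Theorem \ref{co-type} finishes it.

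Let me state the plan in the order I would carry it out. First, observe that $F_j(\eps)=\bE_\delta\delta_j F(\eps,\delta)$ depends only on the Rademacher projection $G:=\mathrm{Rad}_\delta F = \sum_{j=1}^n F_j(\eps)\delta_j$ of $F$ in the $\delta$ variables: indeed $\bE_\delta\delta_j G = \bE_\delta\delta_j F = F_j$. Second, apply Theorem \ref{co-type} (valid since $K$-convex $\Rightarrow$ finite co-type, via the equivalences recalled in the excerpt) to the functions $F_j$: this gives $(\bE_\eps\|\sum_j\Delta^{-1}D_j F_j\|^p)^{1/p}\le C(p,q)\,(\bE_{\delta,\eps}\|\sum_j F_j(\eps)\delta_j\|^p)^{1/p} = C(p,q)\,(\bE_{\delta,\eps}\|G\|^p)^{1/p}$. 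Third, bound $(\bE\|G\|^p)^{1/p} = (\bE\|\mathrm{Rad}_\delta F\|^p)^{1/p}\le K_p(X)\,(\bE\|F\|^p)^{1/p}$ using $K$-convexity. Combining the three gives \eqref{F1} with constant $C(p,q)K_p(X)<\infty$, independent of $n$, and then \eqref{dF} for $X^*$ follows by the duality already noted before the statement. The main obstacle — really the only one — is making sure that ``$F_j$ depends only on $\mathrm{Rad}_\delta F$'' is used correctly and that the $K$-convexity constant $K_p(X)$ is genuinely dimension-free (it is, by Pisier's theorem, which is exactly the content of $X$ being $K$-convex); everything else is a direct citation of Theorem \ref{co-type} plus the duality remark.
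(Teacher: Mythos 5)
Your final plan---observing that $\sum_{j}\delta_j F_j(\eps)$ is exactly the Rademacher projection of $\delta\mapsto F(\eps,\delta)$, applying Theorem \ref{co-type} (legitimate since $K$-convexity implies finite co-type) and then the dimension-free $L^p$-boundedness of the Rademacher projection, and finishing via the duality between \eqref{F1} and \eqref{dF}---is correct and is essentially the paper's own proof, the only difference being that the paper inserts a Kahane--Khintchine step to apply $K$-convexity at an auxiliary exponent $s\le p$, which your direct use of the $L^p$-bounded Rademacher projection makes unnecessary. The exploratory attempts in your first two paragraphs (adapting the $\sqrt{n}$ proof of Theorem \ref{F1thm}) are dead ends, as you yourself note, but they play no role in the final argument.
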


\begin{proof}
We will be proving the dual inequality \eqref{F1} for $X^*$. As $X$ is of non-trivial type, it is $K$-convex by Pisier's theorem  (see \cite{PAnnals} or Theorem 7.4.28 of \cite {HVNVW2}) (and is of finite co-type by K\"onig--Tzafriri theorem 7.1.14 in \cite{{HVNVW2}}, this we will not use). Then $X^*$ is of finite co-type $q$ and it is also $K$-convex (see \cite{G} for self-duality of the class of $B$-convex Banach spaces).

Choose $1<s\le p$.  We will use that $K$-convexity means that the Rademacher projection is bounded on functions in $L^s(X^*)$.
As $X^*$ is of  finite co-type $q$, Theorem \ref{co-type}  implies  the following:
\begin{equation}
\label{F2}
\bE_\eps\|\sum_{i=1}^n \Delta^{-1}D_i F_i\|_{X^*}^p \le  C(q, p) \bE_{\delta, \eps}\|\sum_{i=1}^n \delta_i F_i\|_{X^*}^p\,.
\end{equation}
Now we use Kahane--Khintchine inequality to write for each fixed $\eps_0\in \Omega_n$:
\begin{equation}
\label{KK}
 \bE_\delta\|\sum_{i=1}^n \delta_i F_i(\eps_0)\|_{X^*}^p \le  C(s,q, p)\Big( \bE_\delta\|\sum_{i=1}^n \delta_i F_i(\eps_0)\|_{X^*}^s\Big)^{p/s}\,.
\end{equation}
But the expression $\sum_{i=1}^n \delta_i F_i(\eps_0)$ is the Rademacher projection of function $\delta\to F(\eps_0, \delta)$. 
So $K$-convexity of $X^*$ implies

$$
\Big(\bE_\delta\|\sum_{i=1}^n \delta_i F_i(\eps_0)\|_{X^*}^s \Big)^{p/s}\le C'(K) \Big(\bE_\delta\|F(\eps_0, \delta)\|_{X^*}^s \Big)^{p/s}\le 
$$
$$
C'(K) \bE_\delta\|F(\eps_0, \delta)\|_{X^*}^p \,.
$$
Now we combine that inequality with  \eqref{KK} for a fixed $\eps=\eps_0$. We are left to integrate in $\eps_0$ and to use \eqref{F2}.

\end{proof} 

\begin{rem}
In \cite{HN} inequality \eqref{F1} was proved for $X$ such that $X^*\in UMD$ \textup(in fact a potentially bigger class $UMD^+$ was involved\textup). As non-trivial-type class is self dual, and also strictly wider than $UMD$, so the latter theorem generalizes Theorem 1.4 of \cite{HN}.
\end{rem}
\begin{rem}
It is interesting to notice that the proof in \cite{HN} is based on a formula that means that operators $\Delta^{-1} D_j$ are 
``averages of martingale transforms". As these operators are ``the second order Riesz transforms" on Hamming cube \textup(in fact, $\Delta^{-1} D_j= \Delta^{-1} D_j^2= (\Delta^{-1/2}D_i)^2$\textup), it is natural to compare this averaging of martingale transforms to Riesz transforms with the same idea recently widely used in harmonic analysis, see, e. g. \cite{DV}, \cite{PTV}, \cite{NTV1}, \cite{NTV2}. Paper  \cite{DV} is devoted to representing second order Riesz transforms in euclidean space as averaging of martingale transforms, in \cite{PTV} the similar result is proved for the first order Riesz transforms.
\end{rem}

\begin{thm}
\label{F1log}
Let $X$ be an arbitrary Banach space, and $1\le p<\infty$. Then inequality \eqref{F1}  holds with constant $C(p, n)\lesssim n\log n$.
\end{thm}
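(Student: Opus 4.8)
The plan is to prove the stronger estimate $C(p,n)\le n$, which of course yields the asserted $C(p,n)\lesssim n\log n$. The point is that each second order Riesz transform $\Delta^{-1}D_j$ is a \emph{contraction} on $L^p(\Omega_n,X)$ for every Banach space $X$, so the $n$ terms of \eqref{F1} may be estimated one at a time.

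First I would record the representation $\Delta^{-1}D_j=\int_0^\infty e^{-s\Delta}D_j\,ds$ on $L^p(\Omega_n,X)$. As $\Omega_n$ is finite it suffices to verify it on a Walsh monomial $W_A(\eps)=\prod_{i\in A}\eps_i$: there $D_jW_A=\mathbf 1_{\{j\in A\}}W_A$ and $e^{-s\Delta}W_A=e^{-s|A|}W_A$, so $\int_0^\infty e^{-s\Delta}D_jW_A\,ds=\mathbf 1_{\{j\in A\}}|A|^{-1}W_A=\Delta^{-1}D_jW_A$. Next, formula \eqref{main} shows that $e^{-s\Delta}D_j$ is an $e^{-s}$-contraction of $L^p(\Omega_n,X)$: indeed \eqref{main} gives
$$\|e^{-s\Delta}D_jf(\eps)\|_X\le \frac{e^{-s}}{\sqrt{1-e^{-2s}}}\,\bE_\xi\Big[|\delta_j(s)|\,\|f(\eps\xi(s))\|_X\Big],$$
and if one takes the $L^p_\eps$ norm, pulls $\bE_\xi$ outside by Minkowski's integral inequality, uses that $\eps\mapsto\eps\xi(s)$ preserves the uniform measure on $\Omega_n$ for each fixed realization of $\xi(s)$, and finally the elementary identity $\bE_\xi|\delta_j(s)|=\sqrt{1-e^{-2s}}$, one obtains $\|e^{-s\Delta}D_jf\|_{L^p(\Omega_n,X)}\le e^{-s}\|f\|_{L^p(\Omega_n,X)}$. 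Integrating in $s$ then gives $\|\Delta^{-1}D_j\|_{L^p(\Omega_n,X)\to L^p(\Omega_n,X)}\le\int_0^\infty e^{-s}\,ds=1$.

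To conclude I would combine this with the trivial bound $\big(\bE_\eps\|F_j\|^p\big)^{1/p}=\big(\bE_\eps\|\bE_\delta\delta_jF\|^p\big)^{1/p}\le\big(\bE_{\eps,\delta}\|F\|^p\big)^{1/p}$ (Jensen, first in $\delta$ and then for $t\mapsto t^p$) and the triangle inequality in $L^p(\Omega_n,X)$:
$$\Big(\bE_\eps\Big\|\sum_{j=1}^n\Delta^{-1}D_jF_j\Big\|^p\Big)^{1/p}\le\sum_{j=1}^n\big(\bE_\eps\|\Delta^{-1}D_jF_j\|^p\big)^{1/p}\le\sum_{j=1}^n\big(\bE_\eps\|F_j\|^p\big)^{1/p}\le n\big(\bE_{\eps,\delta}\|F\|^p\big)^{1/p},$$
which is \eqref{F1} with $C(p,n)=n$ (in fact independent of $p$).

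The argument has no genuinely hard step; the only subtlety is that one must \emph{not} imitate the proofs of Theorems~\ref{co-type} and~\ref{F1thm} by re-expanding $F_j=\bE_\delta\delta_jF$ back into $F$ under the $s$-integral. That move reintroduces the heavy-tailed factor $\sum_j\delta_j(s)\delta_j$, and $\int_0^\infty\tfrac{e^{-s}}{\sqrt{1-e^{-2s}}}\,\bE_\xi\big|\sum_j\delta_j(s)\delta_j\big|^p\,ds$ is already infinite (because of the region $s\to0$) once $p\ge3$ — this is exactly why Theorem~\ref{F1thm} needs finite co-type to tame the kernel. Keeping the functions $F_j$ and using only that $\Delta^{-1}D_j$ is a contraction sidesteps this. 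One could equivalently run the heat-flow representation directly on the $F_j$'s, bounding $\big(\bE_\eps\|\sum_j\delta_j(s)F_j(\eps\xi(s))\|^p\big)^{1/p}\le\big(\sum_j|\delta_j(s)|\big)\max_j\big(\bE_\eps\|F_j\|^p\big)^{1/p}$ before integrating and then cancelling $\bE_\xi|\delta_j(s)|=\sqrt{1-e^{-2s}}$ against the kernel; this again produces $C(p,n)=n$, hence in particular the stated $n\log n$.
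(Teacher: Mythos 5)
Your proof is correct, and it takes a genuinely different and more elementary route than the paper's, while actually giving the stronger bound $C(p,n)\le n$ rather than $n\log n$. The key point you isolate --- that \eqref{main} together with the identity $\bE_\xi|\delta_j(s)|=\sqrt{1-e^{-2s}}$ makes $e^{-s\Delta}D_j$ an $e^{-s}$-contraction of $L^p(\Omega_n,X)$ for every Banach space $X$, hence $\Delta^{-1}D_j=\int_0^\infty e^{-s\Delta}D_j\,ds$ a contraction --- is sound (the Minkowski step, the measure-preservation of $\eps\mapsto\eps\xi(s)$ for fixed $\xi(s)$, and the Walsh-monomial verification of the semigroup identity are all fine), and combined with the triangle inequality and the Jensen bound $\|F_j\|_{L^p_\eps(X)}\le(\bE_{\eps,\delta}\|F\|^p)^{1/p}$ it finishes in three lines. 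The paper instead re-expands $F_j=\bE_\delta\delta_jF$ under the $t$-integral, symmetrizes, applies the Kahane contraction principle with the crude pointwise bound $|\sum_j\delta_j|\le n$, and --- because the resulting kernel $e^{-t}/(1-e^{-2t})$ is not integrable at $t=0$ --- must truncate to $P_\tau$ and then pay an extra $\log n$ through $\|f\|_p\le e^{\tau n}\|P_\tau f\|_p$; your argument avoids the truncation entirely by never reintroducing the heavy-tailed factor $\sum_j\delta_j\delta_j(t)$, exactly as you explain. What your approach buys is the removal of the logarithm, which is consistent with (and partially answers) the remark following Theorem \ref{F1log} that the $n\log n$ bound ``sounds a bit silly''; what it does not reach is the conjectured $\sqrt n$ behavior, for which the finite co-type argument of Theorem \ref{F1thm} (which exploits cancellation via $\bE_{\delta'}|\sum_j\delta_j\delta_j'|^p\lesssim n^{p/2}$ rather than the crude $\le n^p$) remains the model.
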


\begin{proof}
We write
$$
(\bE_{\delta, \eps}\|P_\tau\sum_{j=1}^n \Delta^{-1} D_j F_j\|^p)^{1/p} \lesssim
$$
$$
\int_\tau^\infty\frac{e^{-t}}{\sqrt{1-e^{-2t}}}(\bE_\delta\bE_\xi\bE_\eps\|\sum_{j=1}^n \delta_j\delta_j(t) F(\eps\xi, \delta)\|^p)^{1/p} dt=
$$
$$
\int_\tau^\infty\frac{e^{-t}}{1-e^{-2t}}(\bE_\delta\bE_\xi\bE_\eps\|\sum_{j=1}^n \delta_j(\xi_j(t)-\xi_j'(t)) F(\eps, \delta)\|^p )^{1/p}dt,
$$
Now we use Kahane contraction principle:
$$
(\bE_{\delta, \eps}\|P_\tau\sum_{j=1}^n \Delta^{-1} D_j F_j\|^p)^{1/p} \lesssim\log\frac{1+e^{-\tau}}{1-e^{-\tau}} \Big[\bE_\delta \bE_\eps \Big(\|F(\eps, \delta)\|^p \cdot (|\sum_{j=1}^n \delta_j|^p)\Big)\Big]^{1/p} \le 
$$
$$
n\log\frac{1+e^{-\tau}}{1-e^{-\tau}} (\bE_{\eps, \delta} \|F(\eps, \delta)\|^p)^{1/p} \,.
$$
Now using that $\|f\|_p \le e^{\tau  n} \|P_\tau f\|_p$, we get
$$
(\bE_{\delta, \eps}\|\sum_{j=1}^n \Delta^{-1} D_j F_j\|^p)^{1/p} \le n\Big(\min_{0<r<1} r^{-pn} \log \frac{1+r}{1-r} \Big)^{1/p} (\bE_{\eps, \delta} \|F(\eps, \delta)\|^p)^{1/p}
$$
$$
\lesssim n\log n (\bE_{\eps, \delta} \|F(\eps, \delta)\|^p)^{1/p}\,.
$$
\end{proof}

\begin{rem}
This theorem sounds a bit silly. It should be $\lesssim \sqrt{n}$ for all Banach spaces.  It can be that we missed something simple. On the other hand, it may be a worthwhile exercise to ``marry" the example giving $\sqrt{n}$ in \cite{HN} and Talagrand's example from \cite{T}, to possibly have a Banach space with behavior of constant in \eqref{F1}, which is worse than $\sqrt{n}$. I did not try so far.
\end{rem}

\end{document}